\def\ex{{\rm ex}}
\newcommand{\floor}[1]{\left\lfloor #1 \right\rfloor}
\newcommand\degvec[1]{\langle #1 \rangle}
\newcommand\numberthis{\addtocounter{equation}{1}\tag{\theequation}}
\newtheorem{theorem}{Theorem}[section]
\newtheorem{lemma}[theorem]{Lemma}
\newtheorem{proposition}[theorem]{Proposition}
\newtheorem{corollary}[theorem]{Corollary}
\newtheorem{remark}[theorem]{Remark}
\newtheorem{conjecture}[theorem]{Conjecture}
\theoremstyle{definition}
\newtheorem{definition}[theorem]{Definition}
\title{Crowns in linear $3$-graphs}
\date{\today}
\author[A. Carbonero]{Alvaro Carbonero}
\address{Department of Combinatorics and Optimization, University of Waterloo, ON, Canada}
\email{ar2carbo@uwaterloo.com}
\author[W. Fletcher]{Willem Fletcher}
\address{Department of Mathematics and Statistics, Carleton College, Northfield, MN, USA}
\email{willemrfletcher@gmail.com}
\author[J. Guo]{Jing Guo}
\address{Department of Mathematics, University of Utah, Salt Lake City, UT, USA}
\email{math.guoj@gmail.com}
\author[A. Gy\'arf\'as]{Andr\'as Gy\'arf\'as}
\address{Alfr\'ed R\'enyi Institute of Mathematics, Budapest, Hungary}
\email{gyarfas.andras@renyi.hu}
\author[R. Wang]{Rona Wang}
\address{\parbox{\linewidth}{Department of Mathematics, Massachusetts Institute of Technology, Cambridge, \\ MA, USA}}
\email{rona@mit.edu}
\author[S. Yan]{Shiyu Yan}
\address{Department of Mathematics and Statistics, Carleton College, Northfield, MN, USA}
\email{math.shiyu.yan@gmail.com}
\thanks{The work presented here was done as part of the Budapest Semesters in Mathematics Summer Undergraduate Research Program in the summer of 2021 under the supervision of the fourth author. } %(The authors would like to thank anonymous referees for helpful review and comments.)}
\begin{document}
    
    \begin{abstract}
        A \textit{linear $3$-graph}, $H = (V, E)$, is a set, $V$, of vertices together with a set, $E$, of $3$-element subsets of $V$, called edges, so that any two distinct edges intersect in at most one vertex. The linear Tur\'an number, $\ex(n,F)$, is the maximum number of edges in a linear $3$-graph $H$ with $n$ vertices containing no copy of $F$.
        
        We focus here on the \textit{crown}, $C$, which consists of three pairwise disjoint edges (jewels) and a fourth edge (base) which intersects all of the jewels. Our main result is that every linear $3$-graph with minimum degree at least $4$ contains a crown. This is not true if $4$ is replaced by $3$. In fact the known bounds of the Tur\'an number are
        \[ 6 \floor{\frac{n - 3}{4}} \leq \ex(n, C) \leq 2n, \]
        and in the construction providing the lower bound all but three vertices have degree $3$. We conjecture that $\ex(n, C) \sim \frac{3n}{2}$ but even if this were known it would not imply our main result.
        
        Our second result is a step towards a possible proof of $\ex(n,C) \leq \frac{3n}{2}$ (i.e., determining it within a constant error). We show that a minimal counterexample to this statement must contain certain configurations with $9$ edges and we conjecture that all of them lead to contradiction.
    \end{abstract}
    
    \maketitle
    
    \section{Introduction}
    
    A \textit{$3$-graph}, $H = (V,E)$, is a set, $V$, whose elements are called points or vertices together with a set, $E$, of $3$-element subsets of $V$ called edges. If not clear from the context, we use the notation $V(H)$ and $E(H)$ for $V$ and $E$ respectively. We restrict ourselves to the important family of \textit{linear} $3$-graphs where \textit{any two distinct edges intersect in at most one vertex}. In the remainder of this paper we use the term $3$-graph for linear $3$-graph.
    
    The number of edges containing a point $v \in V(H)$ is the \textit{degree} of $v$ and is denoted by $d(v)$ or $d_H(v)$. We denote by $\delta (H)$ the minimum degree of $H$. Similar notations are used for graphs ($2$-uniform linear hypergraphs). We use $[k]$ to denote $\{1, \ldots, k\}$.
    
    Let $F$ be a fixed $3$-graph. A $3$-graph, $H$, is called \textit{$F$-free} if $H$ has no subgraph isomorphic to $F$. The \textit{(linear) Tur\'an number of $F$}, $\ex(n,F)$, is the maximum number of edges in an $F$-free $3$-graph on $n$ vertices. 
    
    The behavior of $\ex(n,F)$ is interesting even if $F$ has three or four edges. A famous theorem of Ruzsa and Szemer\'edi \cite{RSZ} is that $\ex(n,T) = o(n^2)$ if $T$ is the \textit{triangle}. For the \textit{Pasch configuration}, $P$, $\ex(n,P) = \frac{n (n-1)}{6}$ for infinitely many $n$ since there are $P$-free Steiner triple systems (see \cite{CR}). For the \textit{fan}, $F$, we have $\ex(n,F) = \frac{n^{2}}{9}$ if $n$ is divisible by $3$ (see \cite{FGY}). Figure \ref{fig-ex} shows these $3$-graphs (drawn with the convention that edges are represented as straight line segments).
    
    \begin{figure}[H]
        \centering
        \includegraphics[width=0.6\linewidth]{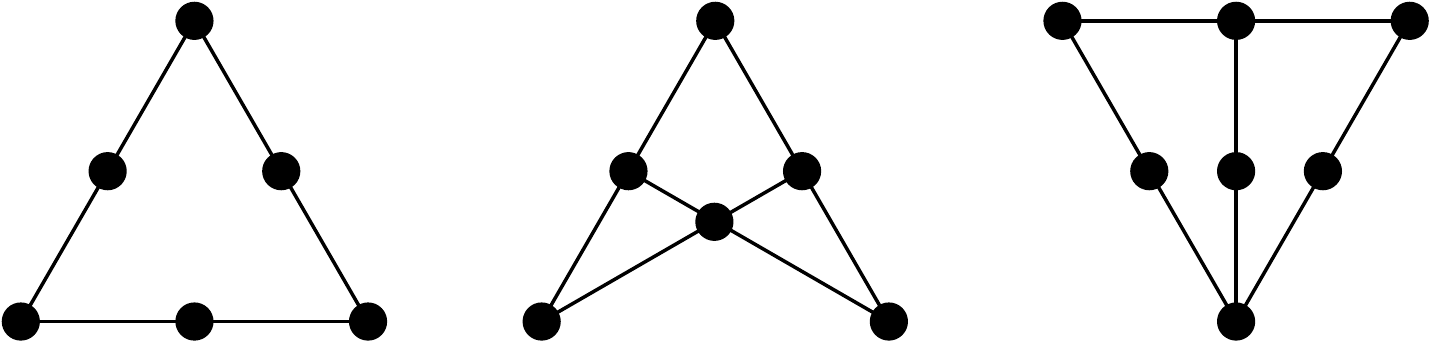}
        \caption{Triangle, Pasch configuration, and Fan}
        \label{fig-ex}
    \end{figure}
    
    This paper is related to the Tur\'an number of the \textit{crown}, $C$ (Figure \ref{fig-crown}).
    
    \begin{figure}[H]
        \centering
        \includegraphics[width=0.25\linewidth]{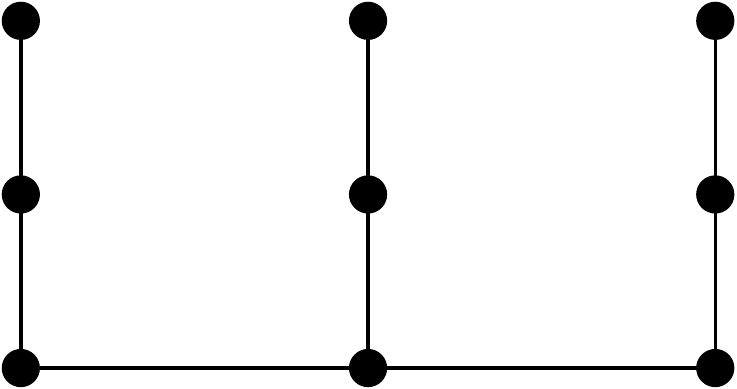}
        \caption{The crown}
        \label{fig-crown}
    \end{figure}
    
    It seems that $C$ deserves a descriptive name, the list of small configurations in \cite{CR} refers to it as $C_{13}$. We call the horizontal edge of the crown the \textit{base} and the vertical edges \textit{jewels}. The crown is the smallest $3$-tree with unknown Tur\'an number, the following bounds are from \cite{GYRS}:
    \begin{equation} \label{crownbound}
        6 \floor{\frac{n - 3}{4}} \leq \ex(n, C) \leq 2n.
    \end{equation}
    %It is worth noting that the Tur\'an numbers of the (3) other $3$-trees with four edges are known \cite{GYRS}.
    The construction for the lower bound in (\ref{crownbound}) (for the case $n \equiv 3 \pmod 4$) is the following.  Choose three vertices $\{ a, b, c \}$, and define  edges
    \[ (a,x_i,y_i), \ (a,z_i,w_i), \ (b,x_i,w_i), \ (b,y_i,z_i), \ (c,x_i,z_i), \ (c,y_i,w_i) \]
    where $i = 1, 2, \ldots, \floor{ (n - 3) / 4}$ and $x_i$, $y_i$, $z_i$, and $w_i$ are distinct vertices.
    
    In this construction (overlay of Fano planes) all but three vertices have degree $3$. This poses the question whether raising the minimum degree of a $3$-graph $H$ from $3$ to $4$ ensures a crown. Our main result is an affirmative answer.
    
    \begin{theorem} \label{main-theorem}
        Every $3$-graph with minimum degree $\delta(H) \geq 4$ contains a crown.
    \end{theorem}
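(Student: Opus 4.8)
\emph{Proof plan.} The idea is to reduce the theorem to a self-contained statement about the links of the three vertices of one edge. Fix any edge $e_0 = \{a,b,c\}$ of $H$ and look for a crown whose base is $e_0$. By linearity, every edge through a vertex $v \in \{a,b,c\}$ other than $e_0$ has the form $\{v\} \cup P$ with $P$ a $2$-subset of $V(H) \setminus \{a,b,c\}$, and two such edges through $v$ share only $v$; so these pairs form a family $\mathcal L_v$ of pairwise disjoint $2$-sets with $|\mathcal L_v| \geq 3$ (as $d(v) \geq 4$). Linearity also forces a set in $\mathcal L_v$ and a set in $\mathcal L_w$ ($v \neq w$) to meet in at most one vertex. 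It is immediate that $H$ contains a crown with base $e_0$ precisely when one can choose pairwise disjoint $P_a \in \mathcal L_a$, $P_b \in \mathcal L_b$, $P_c \in \mathcal L_c$ (the jewels are then $\{a\}\cup P_a$, $\{b\}\cup P_b$, $\{c\}\cup P_c$, which avoid $e_0$ except at $a,b,c$). So Theorem~\ref{main-theorem} follows from: \emph{given three families $\mathcal A, \mathcal B, \mathcal C$ of $2$-sets, each consisting of at least three pairwise disjoint sets, any two sets from different families meeting in at most one point, there are pairwise disjoint $A \in \mathcal A$, $B \in \mathcal B$, $C \in \mathcal C$.}

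For this "rainbow triple'' statement, the first move is a greedy argument disposing of all spread-out configurations; we may first discard surplus members so that $|\mathcal A| = |\mathcal B| = |\mathcal C| = 3$. Since the sets in $\mathcal B$ are pairwise disjoint and any $A_0 \in \mathcal A$ has two elements, at most two members of $\mathcal B$ meet $A_0$, and likewise for $\mathcal C$; hence if some $A_0$ avoids every member of $\mathcal B$, pick $C \in \mathcal C$ avoiding $A_0$ and then $B \in \mathcal B$ avoiding $C$, and we are done. By the symmetry among $\mathcal A, \mathcal B, \mathcal C$, the only surviving case is that \emph{every} set in \emph{every} family meets \emph{every} other family. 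Then each of the three $2$-sets of $\mathcal A$ contributes a distinct vertex to $V(\mathcal A)\cap V(\mathcal B)$ and to $V(\mathcal A)\cap V(\mathcal C)$, and similarly with roles permuted, so the three underlying $6$-element sets overlap heavily and the families behave like perfect matchings on a small common vertex set.

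The heart of the argument is this overlapping case. Since the three families now live on at most fifteen vertices, it is a finite verification: after relabelling, the configuration falls into a short explicit list governed by how $V(\mathcal A), V(\mathcal B), V(\mathcal C)$ intersect, and in each the sought rainbow triple is a rainbow perfect matching in a small auxiliary graph — when that graph is $K_{3,3}$ this is exactly a transversal of a $3\times 3$ Latin square (which always exists), and the remaining shapes (the two cubic graphs on six vertices, and the "tripartite'' splits into $2$ or $3$ triples) are handled by direct inspection. I expect the main obstacle to be organizational rather than conceptual: enumerating, cleanly and exhaustively, the ways the links can be forced to intersect and checking each resulting small configuration. It is precisely here that $\delta(H)\geq 4$ — i.e.\ $|\mathcal L_v|\geq 3$ rather than $2$ — is essential: with only two sets per family both the greedy step and the small checks fail, which is consistent with the crown-free degree-$3$ construction of~(\ref{crownbound}).
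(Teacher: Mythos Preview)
Your reduction to the ``rainbow triple'' statement is correct, but that statement is false, so the whole approach collapses at the ``finite verification'' step. Here is an explicit counterexample on seven points $\{1,\dots,7\}$:
\[
\mathcal A=\{\{1,2\},\{4,5\},\{6,7\}\},\quad
\mathcal B=\{\{2,3\},\{5,6\},\{4,7\}\},\quad
\mathcal C=\{\{1,3\},\{4,6\},\{5,7\}\}.
\]
Each family is a matching of size three, and any two sets from different families share at most one point; yet there is no rainbow system of disjoint representatives (check the three choices of $A$). In the language of your link, this is exactly a properly $3$-edge-coloured graph $G(e_0)$ on a triangle $\{1,2,3\}$ plus a $K_4$ on $\{4,5,6,7\}$, with no rainbow matching. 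It is one of five such obstructions $G_1,\dots,G_5$ that the paper classifies in Lemma~\ref{triple-four-lemma}; each is a genuine crown-free link for an edge with $D(e)=\langle 4,4,4\rangle$, so a crown with base $e_0$ simply need not exist. Your greedy step is fine, but the promised ``direct inspection'' of the residual cases would have to include these five graphs, and on each of them the desired rainbow matching fails.

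The point you are missing is that $\delta(H)\ge 4$ is used a \emph{second} time, at the vertices of the link graph rather than at $a,b,c$. In the paper's proof, once $G(e_0)$ is forced into one of the five shapes, one picks a well-chosen vertex $v$ of $G(e_0)$; since $d_{G(e_0)}(v)\le 3$ while $d_H(v)\ge 4$, there is an edge $f\ni v$ disjoint from $e_0$ and not contributing to $G(e_0)$. The Good Quintuple Lemma (Lemma~\ref{quintuple-lemma}) then converts $f$ together with a suitable two-edge path and a parallel edge inside $G(e_0)$ into a crown whose base is \emph{not} $e_0$. So the crown is found with a different base, and your attempt to force the base to be $e_0$ cannot succeed in general.
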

    
    %In fact, this variation of the Tur\'an number is studied in general: Given a fixed graph (or $3$-graph) $F$, what minimum degree $\delta  \delta(n, F)$ ensures a copy of $F$ in a graph (or $3$-graph) $H$ with $n$ vertices. 
    It is worth noting that even if $\ex(n, C) \leq \frac{3n}{2}$ were known, Theorem \ref{main-theorem} would not follow since minimum degree $4$ ensures only $\frac{4n}{3} < \frac{3n}{2}$ edges. 
    
    For $e = (a, b, c) \in E(H)$, let $D(e)$ denote the degree vector $\degvec{ d(a), d(b), d(c) }$ with coordinates in non-increasing order. We define a partial order on these vectors by considering $D(e) \geq D(f)$ if at all positions the coordinate of $e$ is larger than or equal to that of $f$.
    
    One tool in proving the upper bound $\ex(n,C) \leq 2n$ in \cite{GYRS} was showing that in a crown-free $3$-graph, the set of $11$ edges incident to an edge, $e$, with $D(e) = \degvec{5,5,3}$ form two possible $3$-graphs. To prove Theorem \ref{main-theorem} we need a similar result for the case $D(e) = \degvec{4,4,4}$. Lemma \ref{triple-four-lemma} in Section \ref{sectlinkgraph} proves that we have $5$ possible $3$-graphs in this case.
    
    Our second result is a ``reduction theorem'' showing that the (almost) sharp upper bound $\ex(n,C) \leq \frac{3n}{2}$ would follow if edges with $D(e) \geq \degvec{4,4,3}$ or $D(e) \geq \degvec{5,4,2}$ are not present. It is worth noting that $D(e) \geq \degvec{6,4,2}$ is not possible in a crown-free $3$-graph (an easy exercise).
    
    \begin{theorem} \label{sec-theorem}
       Assume that a crown-free $3$-graph, $H$, with $n$ vertices has no edge $e \in E(H)$ with $D(e) \geq \degvec{4,4,3}$ or $D(e) \geq \degvec{5,4,2}$. Then $\abs{E(H)} \leq \frac{3n}{2}$.
    \end{theorem}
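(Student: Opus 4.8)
The plan is to prove the inequality by a short fractional double‑counting argument; no induction or structural case analysis of $H$ is needed beyond a one‑line check on degree vectors. For an edge $e=(a,b,c)\in E(H)$ set $w(e)=\frac1{d(a)}+\frac1{d(b)}+\frac1{d(c)}$. Summing over all edges and switching the order of summation,
\[
\sum_{e\in E(H)} w(e) \;=\; \sum_{v\in V(H)}\;\sum_{e\ni v}\frac1{d(v)} \;=\; \abs{\{\, v\in V(H): d(v)\ge 1\,\}} \;\le\; n,
\]
since each vertex of positive degree contributes $d(v)\cdot\frac1{d(v)}=1$ and isolated vertices contribute $0$. Hence it suffices to show that the two degree restrictions force $w(e)\ge\frac23$ for every edge $e$; then $\frac23\abs{E(H)}\le\sum_{e}w(e)\le n$, which rearranges to $\abs{E(H)}\le\frac{3n}{2}$. (Note that linearity and crown‑freeness are not used in this argument — only the two hypotheses on $D(e)$ — so what is proved is formally slightly stronger.)

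The remaining point is to check $w(e)\ge\frac23$, and I would do this by splitting on the sorted degree vector $D(e)=(d_1,d_2,d_3)$ with $d_1\ge d_2\ge d_3\ge 1$. If $d_2\le 3$ then also $d_3\le 3$, so $w(e)\ge\frac1{d_1}+\frac13+\frac13>\frac23$. If $d_2\ge 4$, then the hypothesis $D(e)\not\geq\degvec{4,4,3}$ forces $d_3\le 2$; when $d_3=1$ we trivially get $w(e)\ge 1$, and when $d_3=2$ the hypothesis $D(e)\not\geq\degvec{5,4,2}$ forces $d_1\le 4$, hence $d_1=d_2=4$ and $w(e)=\frac14+\frac14+\frac12=1$. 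In every case $w(e)\ge\frac23$, which completes the proof.

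I do not anticipate a genuine obstacle here: the only real content is recognizing that $\frac1{d(a)}+\frac1{d(b)}+\frac1{d(c)}$ is the correct quantity and that the two excluded patterns are exactly what keeps it at least $\frac23$ — the configuration $\degvec{d,3,3}$ with $d$ large shows that $\frac23$ cannot be improved, and correspondingly that the bound $\frac{3n}{2}$ is asymptotically attained by the Fano‑overlay construction, which has all degree vectors of this shape. One should also state cleanly that isolated vertices only slacken the inequality $\sum_e w(e)\le n$; and, if one prefers a discharging formulation, the same computation is obtained by giving each vertex charge $\frac32$ and sending $\frac{3}{2d(v)}$ along each incident edge, so that every edge ends with charge $\frac32 w(e)\ge 1$.
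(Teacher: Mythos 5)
Your proof is correct, and it takes a genuinely different (and shorter) route than the paper. The paper argues via a minimal counterexample: it first removes vertices of degree at most $1$, partitions $V(H)$ into $Y$ (degree at least $4$) and $Z=Z_1\cup Z_2\cup Z_3$ (degree-$3$ vertices, non-special degree-$2$ vertices, special degree-$2$ vertices), splits $E(H)$ into the edges meeting $Z$ at least twice versus exactly once, and needs a separate counting step ($\abs{Z_3}\le\abs{Y}$, proved through an auxiliary graph on the degree-$4$ vertices) before assembling the bound. Your fractional double count $\sum_{e}\sum_{v\in e}1/d(v)\le n$ together with the pointwise bound $w(e)\ge \tfrac23$ collapses all of that into one computation, and your case check is complete: $d_2\le 3$ gives $w(e)>\tfrac23$ outright; $d_2\ge 4$ forces $d_3\le 2$ by the exclusion of $\degvec{4,4,3}$, and then $d_3=2$ forces $d_1=d_2=4$ by the exclusion of $\degvec{5,4,2}$, giving $w(e)=1$, while $d_3=1$ is trivial. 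You also avoid the minimality/induction step and any special treatment of low-degree or isolated vertices, and you make explicit that neither crown-freeness nor linearity is used — which is equally true of the paper's argument but less visible there. Your observation that $\degvec{d,3,3}$ edges make the constant $\tfrac23$ (hence $\tfrac{3n}{2}$) tight, matching the overlay-of-Fano-planes construction, is accurate. What the paper's longer decomposition buys is some structural bookkeeping (which vertices of $Z$ can lie on edges whose other two vertices have degree $4$, i.e., the ``special vertex'' dichotomy), which is in the spirit of the critical-configuration discussion later in the paper; what your argument buys is brevity, a slightly more general statement, and transparency about exactly which hypotheses carry the load.
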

    
    A \textit{critical configuration} in a crown-free $3$-graph is defined by the $9$ edges incident to an edge $e$ with $D(e) = \degvec{4,4,3}$ or with $D(e) = \degvec{5,4,2}$. An immediate corollary of Theorem \ref{sec-theorem} is the following:
    
    \begin{corollary} \label{cor}
        If a crown-free $3$-graph $H$ contains no critical configuration then $\abs{E(H)} \leq \frac{3n}{2}$.
    \end{corollary}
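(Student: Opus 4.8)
The plan is to deduce the corollary from Theorem~\ref{sec-theorem} by strengthening its hypothesis: it suffices to show that a crown-free $3$-graph $H$ containing no critical configuration has \emph{no} edge $e$ with $D(e) \geq \degvec{4,4,3}$ or $D(e) \geq \degvec{5,4,2}$ at all (not merely no edge with $D(e)$ \emph{equal} to one of these), since then Theorem~\ref{sec-theorem} applies directly. First I would enumerate the sorted degree vectors that dominate $\degvec{4,4,3}$ or $\degvec{5,4,2}$ but equal neither: because $D(e) \geq \degvec{6,4,2}$ is impossible in a crown-free $3$-graph, these form the finite set
\[ \mathcal{S} = \{\degvec{4,4,4},\ \degvec{5,4,3},\ \degvec{5,4,4},\ \degvec{5,5,2},\ \degvec{5,5,3},\ \degvec{5,5,4},\ \degvec{5,5,5}\}. \]
So the task reduces to the following claim: a crown-free $3$-graph with an edge whose degree vector lies in $\mathcal{S}$ must contain an edge with degree vector $\degvec{4,4,3}$ or $\degvec{5,4,2}$, i.e.\ a critical configuration.

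To prove the claim I would argue by minimality. Among all edges $e = (a,b,c)$ of $H$, with $d(a) \geq d(b) \geq d(c)$, whose degree vector lies in $\mathcal{S} \cup \{\degvec{4,4,3}, \degvec{5,4,2}\}$, choose one minimizing $d(a)+d(b)+d(c)$ (breaking ties lexicographically), and suppose toward a contradiction that $D(e) \in \mathcal{S}$. Every vector of $\mathcal{S}$ except $\degvec{4,4,4}$ has a coordinate equal to $5$, so in those cases $e$ has a vertex $v$ with $d(v) = 5$; I would then inspect the four edges through $v$ other than $e$. Crown-freeness — in particular that $e$ cannot be the base of a crown, so there are no three pairwise-disjoint edges hitting $a$, $b$, $c$ respectively — forces enough incidences among the satellite edges at $a$, $b$, $c$ that some satellite vertex acquires degree at least $2$; together with the minimality of $d(a)+d(b)+d(c)$, which prevents any edge incident to $e$ from carrying a lighter critical-or-near-critical degree vector, this should pin down one incident edge (or an edge meeting two of them) to have degree vector exactly $\degvec{4,4,3}$ or $\degvec{5,4,2}$ — contradicting the choice of $e$.

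The case $D(e) = \degvec{4,4,4}$ is where I expect the real difficulty, since now no vertex of $e$ has degree $5$ and there is no heavier coordinate to exploit. Here I would invoke Lemma~\ref{triple-four-lemma}: the $10$ edges meeting $e$ form one of exactly five $3$-graphs, and I would go through these five configurations individually, using the overlaps among the satellite edges that crown-freeness imposes to locate, in each, an edge forced to have degree vector $\degvec{4,4,3}$ or $\degvec{5,4,2}$. The subtle point — and the reason $\degvec{4,4,4}$ is the tight case — is that a satellite vertex of degree $3$ \emph{within} the local configuration may gain further edges elsewhere in $H$, lifting a would-be $\degvec{4,4,3}$ edge to $\degvec{4,4,4}$; so the argument must either rest on satellite vertices whose degree is genuinely pinned by the configuration (because any additional edge through such a vertex would create a crown, with $e$ or with a satellite edge as base), or else produce a crown outright. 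Granting the claim, the corollary is immediate from Theorem~\ref{sec-theorem}; a minor loose end is to verify the cited ``easy exercise'' that $D(e) \geq \degvec{6,4,2}$ is impossible, which is precisely what makes the list $\mathcal{S}$ complete.
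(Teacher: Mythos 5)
There is a genuine gap, and it stems from a misreading of the hypothesis. You interpret ``contains no critical configuration'' as ``no edge of $H$ has degree vector \emph{exactly} $\degvec{4,4,3}$ or $\degvec{5,4,2}$ in $H$'', and consequently set out to prove a substantive structural claim: that an edge whose degree vector dominates one of these vectors without equalling it forces some other edge of $H$ to have one of the exact vectors. That claim is never proved in your proposal --- what you offer is a plan (``I would inspect the four edges through $v$'', ``this should pin down one incident edge'', ``I would go through these five configurations''), and you yourself flag the $\degvec{4,4,4}$ case as the real difficulty and leave it open. Nothing in the paper supports such a claim, and it is far from clear that it is true: the satellite edges of an edge with $D(e)=\degvec{4,4,4}$ may well have degree vectors like $\degvec{4,3,3}$ or $\degvec{5,4,4}$ in $H$, so no exactly-critical edge need appear anywhere. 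As written, the proposal therefore does not establish the corollary, and the route it chooses is both unproven and much harder than what is required.

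The deduction the paper intends is immediate because a critical configuration is a $9$-edge \emph{subconfiguration}, not a condition on degrees measured in the ambient graph. If $H$ has an edge $e=(a,b,c)$ with $D(e)\geq\degvec{4,4,3}$, take $e$ together with three further edges through $a$, three through $b$, and two through $c$ (these exist by the degree assumption, and by linearity none of them meets $e$ in two vertices). These $9$ edges form a crown-free $3$-graph in which every edge is incident to $e$ and $e$ has degree vector exactly $\degvec{4,4,3}$; that is, they form a critical configuration contained in $H$. The same works for $D(e)\geq\degvec{5,4,2}$, choosing $4$, $3$, $1$ further edges through $a$, $b$, $c$. Hence ``$H$ contains no critical configuration'' already yields verbatim the hypothesis of Theorem~\ref{sec-theorem}, and $\abs{E(H)}\leq\frac{3n}{2}$ follows at once --- with no enumeration of dominating degree vectors, no minimality argument over edges, and no appeal to Lemma~\ref{triple-four-lemma}.
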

    
    Corollary \ref{cor} may lead to a proof of $\ex(n,C) \leq \frac{3n}{2}$, since minimal counterexamples (with $n$ as small as possible) probably cannot contain critical configurations. 
    
    \begin{conjecture} \label{conj}
        Minimal counterexamples to $\ex(n,C) \leq \frac{3n}{2}$ cannot contain critical configurations.
    \end{conjecture}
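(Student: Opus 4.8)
\emph{Strategy.} The plan for Conjecture~\ref{conj} --- which would complete the proof of $\ex(n,C)\le\frac{3n}{2}$ --- builds on how Theorem~\ref{sec-theorem} is proved, so I sketch that first. Weight each edge $e=\{x,y,z\}$ by $w(e)=\frac{1}{d(x)}+\frac{1}{d(y)}+\frac{1}{d(z)}$; then $\sum_e w(e)\le n$, since each non-isolated vertex $v$ contributes $d(v)\cdot\frac{1}{d(v)}=1$. Under the hypothesis every edge has $w(e)\ge\frac{2}{3}$: if the second-largest of the three degrees on $e$ is at most $3$, so is the smallest and $w(e)>\frac{1}{3}+\frac{1}{3}=\frac{2}{3}$; otherwise $e$ has two degrees $\ge4$, and excluding $D(e)\ge\degvec{4,4,3}$ forces its smallest degree to be $\le2$ (and $\le1$ if some degree is $\ge5$, by excluding $D(e)\ge\degvec{5,4,2}$), so $w(e)\ge\frac{1}{4}+\frac{1}{4}+\frac{1}{2}=1$. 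Hence $\frac{2}{3}|E(H)|\le\sum_e w(e)\le n$, i.e.\ $|E(H)|\le\frac{3n}{2}$, and Corollary~\ref{cor} follows at once. So the remaining content, which Conjecture~\ref{conj} isolates, is that the two ``forbidden'' profiles $\degvec{4,4,3}$ and $\degvec{5,4,2}$ cannot occur inside a minimal counterexample; I would attack this by local surgery.

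Suppose $H$ is a crown-free $3$-graph with $n$ vertices and $|E(H)|>\frac{3n}{2}$, with $n$ as small as possible, and suppose for contradiction that $H$ contains a critical configuration $K$ around an edge $e=\{a,b,c\}$ with $D(e)=\degvec{4,4,3}$ or $D(e)=\degvec{5,4,2}$. In either case $\sum_{v\in e}d(v)=11$, so, since linearity forbids any edge other than $e$ from meeting $e$ in two vertices, $K$ consists of exactly $9$ edges: $e$ together with $8$ ``outer'' edges, each meeting $e$ in a single vertex, spanning $a,b,c$ and at most $16$ further ``outer'' vertices. First I would record the routine minimality reductions --- for instance $\delta(H)\ge2$, since deleting a vertex of degree $\le1$ keeps the $3$-graph crown-free and strictly smaller --- which are cheap and provide a little slack below.

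The core step is a local surgery, carried out separately for each isomorphism type of $K$. I would classify these types in the spirit of Lemma~\ref{triple-four-lemma}, which does this for the heavier profile $\degvec{4,4,4}$: the only freedom is which outer vertices coincide (two outer edges through the same vertex of $e$ must otherwise be disjoint, while two through distinct vertices of $e$ may share exactly one outer vertex), and crown-freeness of these $9$ edges should leave only finitely many configurations. For a fixed type, delete $a,b,c$ --- which removes exactly the $9$ edges of $K$ --- along with any outer vertices thereby isolated, and add back a carefully chosen linear family of edges on the surviving outer vertices. If $p$ outer vertices are deleted, then removing $3+p$ vertices lets one discard up to $\floor{\tfrac{3(3+p)}{2}}$ edges while keeping the edge count above $\frac{3}{2}$ times the vertex count; since $9$ were discarded, we must re-add at least $9-\floor{\tfrac{3(3+p)}{2}}$ of them, which is $0$ once $p\ge3$ and at most $5$ when $p=0$. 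The resulting $3$-graph then has strictly fewer vertices and still more than $\frac{3}{2}$ times as many edges, contradicting the minimality of $H$ --- \emph{provided} it is again linear and crown-free.

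That last proviso is where I expect the real difficulty. Checking crown-freeness of the modified graph is a global matter, not a local one: an outer vertex of $K$ may have large degree with edges reaching far into $H$, so a newly added edge could serve as the base of a crown whose three jewels lie entirely outside $K$, or as one of those jewels. Ruling this out seems to require control of the second neighborhood of $e$, not just the $9$ edges of $K$, which is presumably why the statement is only conjectured. A secondary complication appears when $D(e)=\degvec{5,4,2}$: deleting the degree-$5$ vertex is expensive, so one may instead have to identify pairs of outer vertices (contracting rather than deleting) to keep the count favorable, at the risk of creating parallel edges or a crown that must then be excluded type by type. If all of the finitely many types can be dispatched this way, then together with Corollary~\ref{cor} one obtains $\ex(n,C)\le\frac{3n}{2}$.
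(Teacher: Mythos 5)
Your embedded weighting argument (give each edge $f=\{x,y,z\}$ weight $\frac{1}{d(x)}+\frac{1}{d(y)}+\frac{1}{d(z)}$, check it is at least $\frac23$ under the degree-vector hypothesis, and sum over vertices) is a correct and rather cleaner alternative proof of Theorem \ref{sec-theorem}, but that theorem is not the statement under review. For Conjecture \ref{conj} itself your proposal is a plan, not a proof, and the gap is exactly the step you flag yourself: after deleting $\{a,b,c\}$ you must, when few outer vertices become isolated, \emph{add} up to five new triples on the surviving outer vertices, and nothing guarantees that any such triples can be chosen so that the result is still linear and crown-free. Crown-freeness of the surgered graph is a global condition (the outer vertices may have high degree, with edges reaching arbitrarily far from the configuration), so it cannot be certified from the nine edges of $K$ alone; no case of the conjecture is actually completed in your write-up, and the $\degvec{5,4,2}$ contraction variant has the same unaddressed problem in a worse form (identifications can create parallel edges and crowns).

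It is worth knowing that the paper does not prove the full conjecture either --- it settles exactly one critical configuration, an edge $e$ with $D(e)=\degvec{4,4,3}$ whose link graph $G_6$ is two disjoint four-cycles --- but it does so by a deletion-only argument that avoids your difficulty entirely. Taking $X=V(G_6)\cup\{a,b,c\}$, with $\abs{X}=11$, the paper shows that $X$ is incident to very few edges of $H$: besides the nine edges of the configuration, only the at most four edges extending diagonals of the two four-cycles are possible, every other candidate being excluded either by exhibiting a crown with base $e$ directly or by the good quintuple lemma (Lemma \ref{quintuple-lemma}). Since $9+4=13<\frac{3\abs{X}}{2}=16.5$, simply deleting $X$ and its incident edges from a minimal counterexample leaves a smaller counterexample --- and because only deletion is involved, the smaller graph is automatically linear and crown-free, so no re-addition of edges (and hence no global crown check) is ever needed. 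If you want to pursue the conjecture, replace your ``delete $\{a,b,c\}$ and add edges back'' surgery by this ``delete all of $X$ and bound the edges incident to $X$'' scheme, where the classification in the spirit of Lemma \ref{triple-four-lemma} and the good quintuple lemma carry the real load; even then, a full proof requires carrying this out for every critical configuration, which the paper explicitly leaves open.
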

   
    We prove Conjecture \ref{conj} for one particular critical configuration in Section \ref{concluding-section}. The method seems to work for all others but new ideas are needed to achieve a reasonably short proof this way.
    
    In Sections \ref{sectlinkgraph} and \ref{secgoodquint} we define our tools. In Sections \ref{sectproof1} and \ref{sectproof2} we prove Theorems \ref{main-theorem} and \ref{sec-theorem}.

    \section{Link graphs of edges with $D(e) = \degvec{4, 4, 4}$}
    \label{sectlinkgraph}
    
     \begin{definition}[Link graph of an edge] \label{linkgraph}
         Assume that $H$ is a $3$-graph and $e = (a,b,c) \in E(H)$. The \textit{link graph}, $G(e)$, is the graph whose edges are the pairs $(x,y)$ for which there exists $(x,y,z) \in E(H)$ with $z \in \{a,b,c\}$. The set of vertices of $G(e)$ is defined as the subset of $V(H)$ covered by the edges of $G(e)$. 
     \end{definition}
    
    Note that Definition \ref{linkgraph} provides a proper $3$-coloring of the edges of $G(e)$ with colors $a$, $b$, and $c$. We denote by $\varphi(x,y)$ the color of the edge $(x,y)$ in this coloring. Edges with colors $a$, $b$, and $c$ will be labelled $\alpha$, $\beta$, and $\gamma$, respectively and are colored red, blue, and green in colored figures. Observe that a crown with base edge $e$ exists in $H$ if and only if $G(e)$ has three pairwise disjoint edges with different colors,  which we call a \textit{rainbow matching}.
    
    \begin{lemma} \label{triple-four-lemma}
        If a crown-free $3$-graph $H$ has an edge $e = (a, b, c)$ such that $D(e) = \degvec{4, 4, 4}$, then $G(e)$ is isomorphic (up to permutation of colors) to one of the following five graphs (see Figure \ref{fig-link-graphs}).
    \end{lemma}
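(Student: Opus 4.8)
The plan is to first translate the hypothesis $D(e) = \degvec{4,4,4}$ into a rigid combinatorial skeleton and then run a structured case analysis on it. Since $d(a) = d(b) = d(c) = 4$ and $H$ is linear, each of $a$, $b$, $c$ lies in exactly three edges besides $e$, and linearity forbids any such edge from meeting $e$ in a second vertex; hence the three edges through $a$ (other than $e$) contribute three pairwise disjoint $\alpha$-edges to $G(e)$, and likewise for $b$ and $c$. Thus $G(e)$ has exactly nine edges, its colour classes $M_a$, $M_b$, $M_c$ are matchings of size $3$ (each a perfect matching on its own six vertices), and, crucially, being crown-free is exactly the statement that $G(e)$ contains no rainbow matching of size $3$. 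I will also record the equivalent reformulation that is convenient for the analysis: for every pair of disjoint edges $r \in M_a$ and $s \in M_b$, every edge of $M_c$ meets the four-vertex set $r \cup s$ (and the analogue obtained by permuting the colours).

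The core of the argument is a case analysis on the union of two colour classes, say $G' = M_a \cup M_b$. This is a graph of maximum degree $2$ with three red and three blue edges, so each component of $G'$ is an alternating path or an even alternating cycle; there are only finitely many possibilities for the multiset of component types (a single $C_6$; a single $P_6$; a $C_4$ together with two isolated single-colour edges; a $C_4$ plus a $P_2$; a $P_5$ plus an isolated edge; a $P_4$ plus shorter pieces; two $P_3$'s; three $P_2$'s; and the remaining splittings into short paths and isolated edges). The threefold symmetry among $a$, $b$, $c$ lets us prune the list. For each surviving type I would then apply the reformulation above: the disjoint red--blue pairs visible in $G'$ force every green edge to lie within an explicit small set of vertices, which typically pins $M_c$ down to a short list of placements; several types are eliminated outright because \emph{no} choice of $M_c$ can block all rainbow matchings (this already kills, e.g., the case where $M_a$ and $M_b$ are vertex-disjoint, as well as the cases where $M_a\cup M_b$ spans too many vertices).

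Finally I would assemble the configurations that survive and collapse them under graph isomorphism, allowing permutations of the three colour classes, verifying that exactly five isomorphism classes remain and that they are the graphs of Figure~\ref{fig-link-graphs}; along the way one checks directly that each of the five listed graphs is indeed rainbow-matching-free, so that the list is simultaneously complete and correct. The step I expect to be the main obstacle is the bookkeeping in the case analysis: the branching on the shape of $M_a \cup M_b$ (and then on the placement of $M_c$) is extensive, and the delicate points are (i) being certain that every branch is covered and (ii) correctly detecting which of the resulting nine-edge coloured graphs are isomorphic, so that the final tally is exactly five and not more. The symmetry reduction and the fact that $M_c$ is almost always confined to already-named vertices are what keep this within reach.
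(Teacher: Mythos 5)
Your setup is exactly the paper's: the three colour classes of $G(e)$ are matchings of size three, crown-freeness with base $e$ is the absence of a rainbow matching, and the key working principle (for any two disjoint edges of two colours, every edge of the third colour must meet their union) is precisely the observation that opens the paper's proof; the paper then runs the same case analysis you describe, organized by $\abs{M_1 \cap M_2} \in \{3,4,5,6\}$ rather than by naming the component types of $M_a \cup M_b$, which is an equivalent bookkeeping device. The problem is that your writeup stops at the plan. The conclusion of the lemma is a concrete list of five graphs, so the enumeration itself --- which shapes of $M_a \cup M_b$ survive, which placements of the green matching block every rainbow matching in each shape, and how the survivors collapse to exactly $G_1,\dots,G_5$ up to colour permutation --- \emph{is} the proof, and none of it is carried out; you explicitly flag the branching and the isomorphism-detection as the expected obstacle and leave them unresolved. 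As it stands nothing in the argument certifies completeness of the case list or that the final tally is five rather than four or six.

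Two concrete remarks for when you execute the plan. First, your preliminary list of shapes is looser than necessary (and partly inconsistent with six edges in $M_a \cup M_b$): the intersection principle applied to $M_a$ and $M_b$ alone already forces their vertex sets to share at least three vertices, and a short argument (done in the paper) rules out exactly three as well, so only the shapes with $\abs{M_a \cap M_b} \in \{4,5,6\}$ need analysis --- a path or cycle structure with at most two components each time, which keeps the branching small. Second, in each surviving shape the admissible green edges form a short explicit list (e.g.\ seven candidate edges when $M_a \cup M_b$ is a seven-vertex alternating path), and the matchings of size three inside that list must be enumerated and then identified across cases; this is where the paper spends its effort and where your ``delicate points (i) and (ii)'' live, so the lemma cannot be considered proved until that table is actually produced.
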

    
    \begin{figure}[H]
        \centering
        \includegraphics[width=0.75\linewidth]{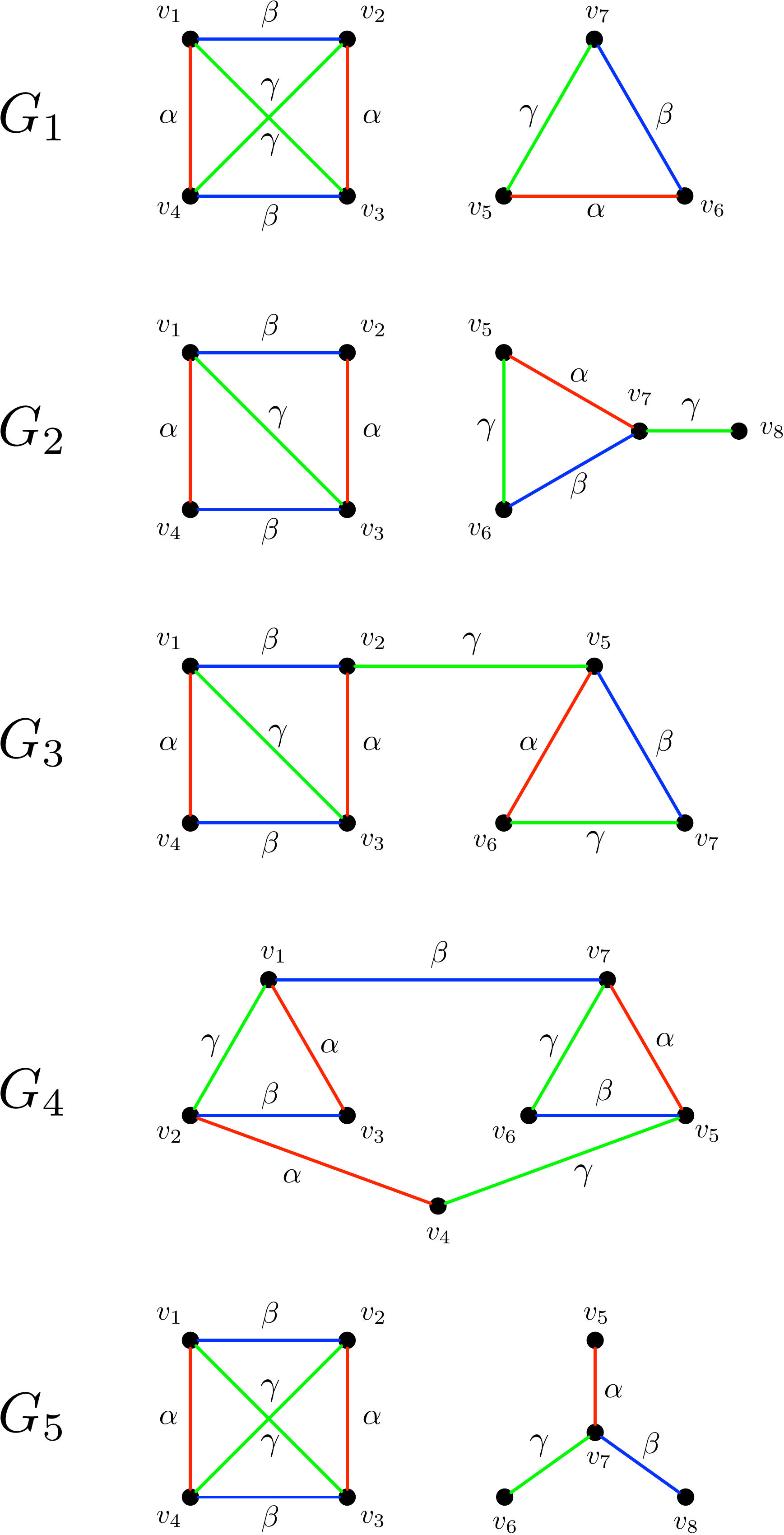}
        \caption{Link graphs $G_1, \ldots, G_5$}
        \label{fig-link-graphs}
    \end{figure}
    
    \begin{proof}
        For $i \in [3]$, let $M_i$ denote the vertex set of the matching of color $a$, $b$, and $c$ in $G(e)$, respectively. Observe that for all $i, j \in [3]$, where $i \neq j$, $M_i$ must intersect all the three edges in $M_j$. Otherwise, there exists an edge $f \in M_j$ not intersecting $M_i$ and an edge $g \in M_k$, for $k \neq i \text{ or } j$, such that $g \cap f = \emptyset$. Then $f$, $g$, and some edge in $M_i$ is a rainbow matching.

        First, we show that $\abs{M_1 \cap M_2} > 3$. It follows from the previous observation that $\abs{M_1 \cap M_2} \geq 3$. Assume for contradiction that $\abs{M_1 \cap M_2} = 3$. Then $S = M_1 \cap M_2$ cannot contain an edge from $M_1 \cup M_2$, otherwise we get a contradiction with our observation. Thus, $M_1$ and $M_2$ are matchings from $S$ to $M_1 \setminus S$ and from $S$ to $M_2 \setminus S$, respectively. If $e \in M_3$, then $e \in S$ since otherwise $e$ would be part of a rainbow matching. However, it is impossible for there to be three disjoint edges in $S$ since $\abs{S} = 3$ by assumption.
        
        Now, suppose $\abs{M_1 \cap M_2} = 4$. Then, in $S$ there is one edge
        of $M_1$ and one edge of $M_2$. If the two edges are disjoint, we have two disjoint $\alpha$-$\beta$ paths both with four vertices. One has two $\alpha$-edges and one $\beta$-edge, and the other has the opposite. To avoid a rainbow matching, any $\gamma$-edge must intersect every edge in one of the paths. There are thus four possible locations for a $\gamma$-edge. Choose any three of them gives $G_2$. On the other hand, if the two edges intersect, we will get a contradiction. In this case, the edges of $M_1 \cup M_2$ form two disjoint, alternating $\alpha$-$\beta$ paths with three and five vertices, respectively. Following the paths, label the three vertices $v_1$ through $v_3$, and the five vertices $w_1$ through $w_5$. The only possible location for a $\gamma$-edge that does not intersect $v_2$ and would not form a rainbow matching is $(w_2, w_4)$. However, then at least two $\gamma$-edges must intersect $v_2$, a contradiction.
        
        We show that the next case implies that $G(e)$ is isomorphic (up to permutation of colors) to one of the $G_i$'s.
        
        Assume $\abs{M_1 \cap M_2} = 5$. Then either $M_1 \cup M_2$ is an alternating $\alpha$-$\beta$ path on seven vertices, or it is a disjoint alternating $\alpha$-$\beta$ four-cycle and $\alpha$-$\beta$ path on three vertices. 
        
        In the former case, label the vertices $v_1$ through $v_7$ along the path. Then the possible $\gamma$-edges that don't create a rainbow matching are $(v_1, v_3)$, $(v_1, v_6)$, $(v_2, v_4)$, $(v_2, v_6)$, $(v_2, v_7)$, $(v_4, v_6)$, and $(v_5, v_7)$. Apart from the symmetry (reflection of a point of the path through $v_4$), the non-intersecting triples of these edges are
        \[ \{ (v_1, v_3), (v_2, v_4), (v_5,v_7) \}, \]
        \[ \{ (v_1, v_3), (v_2, v_7), (v_4,v_6) \}, \ \{ (v_1, v_3), (v_2, v_6), (v_5,v_7)\}. \]
        The first triple gives $G_3$, and the last two triples give $G_4$. 
        
        In the latter case, label the vertices $v_1$ through $v_3$ along the path and $w_1$ through $w_4$ along the cycle. If there is a $\gamma$-edge containing a vertex, $u \notin M_1 \cup M_2$, then that edge must be $(u, v_2)$ or part of a rainbow matching. Thus, there is at most one such edge. The vertex $v_2$ may also be in a $\gamma$-edge with $w_i$ for $i \in [4]$; however, it may only be in one so we only consider the edge $(v_2, w_1)$. The other possible $\gamma$-edges are $(v_1, v_3)$, $(w_1, w_3)$, and $(w_2, w_4)$. Thus, apart from the symmetry (choice of diagonal in the cycle), the possible non-intersecting triples from these edges are
        \[ \{ (v_1, v_3), (w_1, w_3), (w_2, w_4) \}, \ \{ (v_1, v_3), (u, v_2), (w_1, w_3) \}, \]
        \[ \{ (v_1, v_3), (v_2, w_1), (w_2, w_4) \}, \ \{ (u, v_2), (w_1, w_3), (w_2, w_4) \}. \]
        These triples give $G_1$, $G_2$, $G_3$, and $G_5$, respectively.
        
        Lastly, suppose $\abs{M_1 \cap M_2} = 6$. Then $M_1 \cup M_2$ is an alternating $\alpha$-$\beta$ six-cycle. Any $\gamma$-edges
        intersecting the cycle in at most one vertex or along a long diagonal are in rainbow matchings. However, at most two $\gamma$-edges can be short diagonals without intersecting, which is a contradiction thus concludes the proof.
    \end{proof}

    \section{Good quintuple lemma}
    \label{secgoodquint}
    
    As shown in Section \ref{sectlinkgraph}, it is easy to recognize a crown with base edge $e$: We have to find a rainbow matching in $G(e)$. To recognize other crowns related to $G(e)$, we introduce the following definition:
    
    \begin{definition}[Good quintuple]
        A quintuple $Q=\{ x_{1}, x_{2}, x_{3}, x_{4}, x_{5} \}$ of vertices of $G(e)$ is \textit{good} if
        \begin{itemize}
            \item $x_{1} x_{2}$, $x_{2} x_{3}$, and $x_{4} x_{5}$ are edges of $G(e)$
            \item $\varphi(x_{1} x_{2}) = \varphi(x_{4} x_{5})$
        \end{itemize}
    \end{definition}
    
    \begin{remark}\label{quintremark}
        The ordering of the vertices in $Q=\{ x_1,x_2,x_3,x_4,x_5 \}$ is important. Assume that $Q$ is a good quintuple. Then the quintuple $\{ x_1,x_2,x_3,x_5,x_4 \}$ is still good. However, observe that $\{ x_2,x_1,x_3,x_4,x_5 \}$ is good if and only if $(x_1,x_3)$ is an edge in $G(e)$. On the other hand, $\{ x_3,x_2,x_1,x_4,x_5 \}$ is never good.
    \end{remark}
    
    \begin{remark} \label{allbutv7}
        Observe (see Figure \ref{fig-link-graphs}) that apart from $v_7 \in V(G_5)$, every vertex in each $G_i$ is the first vertex of some good quintuple.
    \end{remark}
    
    \begin{lemma}[Good quintuple lemma] \label{quintuple-lemma}
        Assume $H$ is a crown-free $3$-graph and $Q = \{ x_{1}, x_{2}, x_{3}, x_{4}, x_{5} \}$ is a good quintuple in $G(e)$ for some $e = (a, b, c) \in E(H)$. Then there is no edge $f \in E(H)$ such that $f \cap e = \emptyset$ and that $f \cap Q = \{x_1\}$.
    \end{lemma}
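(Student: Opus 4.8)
The plan is to convert the two defining properties of a good quintuple into a short list of concrete edges of $H$, and then, assuming such an $f$ exists, to write down an explicit crown and contradict the hypothesis.

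First I would record some normalizations. Since the vertices of $G(e)$ are exactly those covered by its edges and $H$ is linear, none of $a,b,c$ is a vertex of $G(e)$: if, say, $a$ were, then some $(a,y,z)\in E(H)$ with $z\in\{b,c\}$ would share two vertices with $e$. Hence $x_1,\dots,x_5$ are five distinct vertices, all different from $a,b,c$. Permuting the colors I may assume $\varphi(x_1x_2)=\varphi(x_4x_5)=c$; and since $x_1x_2$ and $x_2x_3$ share $x_2$, the proper $3$-coloring forces $\varphi(x_2x_3)\in\{a,b\}$, so after relabeling $a$ and $b$ I may take $\varphi(x_2x_3)=b$. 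By the definition of $\varphi$ (the witnessing third vertex is unique, again by linearity) this says
\[ (x_1,x_2,c),\quad (x_2,x_3,b),\quad (x_4,x_5,c)\ \in\ E(H). \]

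Now suppose toward a contradiction that there is $f=(x_1,p,q)\in E(H)$ with $f\cap e=\emptyset$ and $f\cap Q=\{x_1\}$; thus $p,q\notin\{a,b,c\}\cup\{x_2,x_3,x_4,x_5\}$. I claim the three edges $f$, $(x_2,x_3,b)$, $(x_4,x_5,c)$ are pairwise disjoint and that $(x_1,x_2,c)$ meets each of them, so that these four (distinct) edges form a crown. Pairwise disjointness of the jewels is bookkeeping: $(x_2,x_3,b)$ and $(x_4,x_5,c)$ are disjoint because the $x_i$ are distinct and $b\neq c$, while $f$ avoids $x_2,x_3,x_4,x_5$ by $f\cap Q=\{x_1\}$ and avoids $b,c$ by $f\cap e=\emptyset$. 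For the base, $(x_1,x_2,c)$ meets $f$ in $x_1$, meets $(x_2,x_3,b)$ in $x_2$, and meets $(x_4,x_5,c)$ in the vertex $c$; distinctness of the four edges follows from the same observations (e.g.\ $(x_1,x_2,c)\neq f$ because $c\in e$ is disjoint from $f$). This crown contradicts crown-freeness of $H$.

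Since the argument is this direct, there is no genuine analytic difficulty; the only point requiring care is the color bookkeeping. Its heart is that the returning edge $(x_1,x_2,c)$ closes up the crown through the apex $c$ rather than through any $x_i$ — which is precisely why the definition of a good quintuple demands $\varphi(x_1x_2)=\varphi(x_4x_5)$, and why, when the lemma is applied, one must be attentive to the ordering of $x_1,\dots,x_5$ (cf.\ Remark~\ref{quintremark}).
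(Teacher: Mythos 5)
Your proof is correct and follows essentially the same route as the paper: normalize the colors so that the good quintuple yields the edges $(x_1,x_2,c)$, $(x_2,x_3,b)$, $(x_4,x_5,c)$ of $H$, then exhibit the crown with base $(x_1,x_2,c)$ and jewels $f$, $(x_2,x_3,b)$, $(x_4,x_5,c)$. Your additional bookkeeping (that $a,b,c\notin V(G(e))$ by linearity and that the jewels are pairwise disjoint) just makes explicit what the paper leaves implicit.
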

    
    \begin{proof}
        Without loss of generality, $Q$ defines the edges $(x_1,x_2,a)$, $(x_2,x_3,b)$, and $(x_4,x_5,a)$ in $H$. Assume towards contradiction that edge $f = (p,q,x_1)$ where (from the assumptions) $p, q \notin Q \cup \{ a,b,c \}$.
        
        Observe that
        \[(p,q,x_1), (x_2,x_3,b), (x_4,x_5,a)\]
        are pairwise disjoint edges and $(x_1,x_2,a)$ intersects all of them, thus we have a crown (with base $(x_1,x_2,a)$), a contradiction.
    \end{proof}

    \section{Proof of Theorem \ref{main-theorem}}
    \label{sectproof1}
        
        Suppose that Theorem \ref{main-theorem} is not true, there exists a crown-free $3$-graph $H$ with $\delta(H) \geq 4$. Select an arbitrary edge $e = (a,b,c) \in E(H)$ and let $H'$ be the $3$-graph obtained from $H$ by removing edges intersecting $e$ until $D(e) = \degvec{4,4,4}$ in $H'$. Then Lemma \ref{triple-four-lemma} can be applied to $H'$ and we get that $G_i \subseteq G(e)$ for some $i \in [5]$. Further, note that every vertex $v$ in $G_i$ has degree at most three in $H'$, thus we can select $f_v \in E(H)$ such that $v \in f_v$ and $f_v \cap e = \emptyset$. Selecting $v \neq v_7$, there exists a good quintuple $Q$ with first vertex $v$ in $G_i$ (see Remark \ref{allbutv7}). We shall get a contradiction from Lemma \ref{quintuple-lemma}, finding a good quintuple $Q$ satisfying $f_v \cap Q = \{v\}$. This is obvious if $f_v \cap V(G_i) = \{v\}$, therefore in the subsequent cases we may assume that $f_v = (v,p,q)$ where $v,p \in V(G_i)$.

        \begin{itemize}
        \item $G(e) = G_1$. Set $v=v_1$ and from the symmetry of $G_1$ we may assume that $f_{v_1} = (v_1, v_5, q)$ (where $q \notin V(G_1)$).  Then $Q = \{v_1,v_2,v_3,v_6,v_7\}$ is a good quintuple. 
        
        \item $G(e) = G_2$. Set $v=v_1$ and (apart from symmetry) we have to consider either $f_{v_1} = (v_1,v_5,q)$ (where $q=v_8$ is possible) or $f_{v_1} = (v_1,p,q)$ where $p \in \{v_7,v_8\}$ and $q \notin V(G_2)$. In the former case $Q = \{v_1,v_2,v_3,v_6,v_7\}$ and in the latter $Q = \{v_1,v_3,v_4,v_5,v_6\}$ is a good quintuple.
        
        \item $G(e) = G_3$. Set $v=v_2$ and (up to symmetry) we have to consider either $f_{v_2} = (v_2, v_6, q)$ (where $q = v_4$ is possible) or $f_{v_2} = (v_2, v_4, q)$ (where $q\not \in G(e)$). In both cases $Q = \{v_2, v_5, v_7, v_1, v_3 \}$ is a good quintuple.
        
        \item $G(e) = G_4$. Set $v=v_1$. We have to consider three cases: either $f_{v_1} = (v_1, v_4, q)$ (where $q = v_6$ is possible), $f_{v_1} = (v_1, v_6, q)$ (where $q \not = v_4$), and $f_{v_1} = (v_1, v_5, q)$. In the first two cases $Q = \{v_1, v_3, v_2, v_5, v_7 \}$ is a good quintuple, and in the last case $ Q = \{v_1, v_2, v_3, v_6, v_7 \}$ is a good quintuple.
        
        %\item $G(e) = G_5$. Set $v = v_5$. Up to symmetry, we have to consider either $p = v_6$ where $q \not = v_8$ and $q = v_1$ is possible, or $p = v_6$ and $q = v_8$. In the former $Q = \{v_1, v_7, v_8, v_2, v_3 \}$ is a good quintuple. For the latter, assume that $f_{v_5} = \{v_5, v_6, v_8\}$. If there exists any other edge incident to $v_5$ that does not overlap with $\{a, b, c \}$, then we can form a quintuple as before. Thus, assume that such an edge does not exist for $v_5, v_6$ and $v_8$. This implies the existence of edges $e_{v_5}, e_{v_6}$ and $e_{v_8}$ containing $\{v_5, b \}$, $\{v_6, a \}$ and $\{v_8, c \}$ respectively in $H$. If the edges are mutually exclusive, then $f_{v_5}$ as a base and $e_{v_5}, e_{v_6}$ and $e_{v_8}$ as jewels make a crown. On the other hand, if, say, $e_{v_5} \cap e_{v_6} = \{u \}$, then $e_{v_6} = \{v_6, a, u\}$ as a base and $\{v_1, v_4, a \}, e_5 = \{v_5, u, b \}$, and $\{v_7, v_6, c \}$ as jewels make a crown. Similar cases have similar solutions.
        
        %The last aspect to cover is when $G_i$ is a proper subgraph of $G(e)$, but in that case we can apply %the argument above to $G_i$ to get the result.
       
        \item $G(e) = G_5$. Set $v=v_5$. Assume first that $f = f_{v_5} = \{v_5, v_6, v_8\}$. In this case $d_H(a) = d_H(b) = d_H(c) = 4$, i.e. $H=H'$ since otherwise we have an edge $g$ intersecting $e$ and intersecting $V(G_5)$ in at most one point (in one of $v_5,v_6,v_8$). Then $g$ would be a jewel in a crown with base $e = (a,b,c)$ leading to contradiction. Since $d_{H'}(v_5) \geq 3$, there exists $f' = f'_{v_5} = (v_5,p,q) \in E(H)$ different from $f_{v_5}$ and from $(a,v_5,v_7)$. Since $f' \cap e = \emptyset$ (from $H=H'$), we can select $f'$ instead of $f$. Up to symmetry, we may assume that $p = v_6$ and $q = v_1$. Then $Q = \{v_5, v_7, v_8, v_2, v_3 \}$ is a good quintuple.
       \end{itemize}
       Since all cases ended by finding $v \in V(G_i)$, $f_v \in E(H)$, and a good quintuple $Q$ such that $f_v \cap e = \emptyset$ and $f_v \cap Q = \{v\}$, we get a contradiction from Lemma \ref{quintuple-lemma}, concluding the proof. \qed
        
       % \item $G(e) = G_5$. Set $v = v_5$. Up to symmetry, we have to consider either $p = v_6$ and $q = v_1$, or $p = v_6$ and $q = v_8$. In the former case, $Q = \{v_5, v_7, v_8, v_2, v_3 \}$ is a good quintuple. For the latter, we have $f_{v_5} = \{v_5, v_6, v_8\} \in E(H')$. If $d_{H'}(v_5) \geq 3$, then there is a third edge incident to $v_5$ and we are done by the former case. Therefore, we may assume $d_{H'}(v_5) = 2$. However, then a $\beta$-edge containing $v_5$ and a $\gamma$-edge containing $v_5$ were removed from $H$ in building $H'$. Consider the removed $\beta$-edge, $g = \{b, v_5, p\} \in E(H)$. If $p \not \in G(e)$, then there is a crown with base $e$ and jewels $\{ a, v_1, v_4 \}, g,$ and $\{c, v_6, v_7 \}$, a contradiction. If $p \in G(e)$, $p = v_6$ by linearity of $H$. However, then $f_{v_5} \cap g = \{ v_5, v_6 \}$, contradicting the linearity of $H$.
       
    \section{Proof of Theorem \ref{sec-theorem}}
    \label{sectproof2}
        
        Suppose that Theorem \ref{sec-theorem} is not true: Let $H$ be a minimal counterexample, that is, a crown-free $3$-graph satisfying the conditions in Theorem \ref{sec-theorem} and $\abs{E(H)} > 3n/2$, with $n$ as small as possible. If there is a vertex $v \in V(H)$ with degree $d(v) \leq 1$, we remove $v$ together with the one possible edge containing $v$ and get a smaller counterexample, contradicting the minimality of $H$. Thus $d(v) \geq 2$ holds for all $v \in V(H)$.
        
        We define the partition $V(H) = Y \cup Z$ where $Z$ contains the vertices of degree at most three and $Y$ is the set of remaining vertices (of degree at least four).
        
        A \textit{special vertex} is a vertex $v$ with $d_{H}(v) = 2$, such that for the two edges $(a_1, a_2, v)$ and $(b_1, b_2, v)$ containing $v$ we have
        \[ d_{H}(a_1) = d_{H}(a_2) = d_{H}(b_1) = d_{H}(b_2) = 4. \]
        
        Partition $Z$ into three parts as follows: Let $Z_1$ be the set of vertices in $Z$ with $d_H(v) = 3$, $Z_2$ be the set of non-special vertices in $Z$ with $d_H(v) = 2$, and $Z_3$ be the set of special vertices. Let $E_1$ denote the set of edges in $H$ intersecting $Z$ in at least two vertices and set $E_2 = E(H) \setminus E_1$. Since there is no edge $(a, b, c)$ in $E(H)$ with $\degvec{a, b, c} \geq \degvec{4, 4, 4}$, all edges in $E_2$ intersect $Z$ in exactly one vertex. See Figure \ref{fig:prop51fig} illustrating the definitions, where the numbers indicate degrees and edges of $E_2$ are dotted.
        
        \begin{figure}[h!tpb]
    \centering
    \begin{tikzpicture}[block1/.style ={rectangle, draw=black, thin, text width=4em, minimum height=21em, text height=15em, align=center}, block2/.style ={rectangle, draw=black, thin, text width=4em, minimum height=21em, text height=15em, align=center}]
    
    \draw[color=black, thick] (1,3) -- (1,1) -- (-1,1) -- (-1, 3) -- cycle;
    \draw[color=black, thick] (-1,1) -- (-1, -0.5) -- (1, -0.5) -- (1, 1);
    \draw[color=black, thick] (-1, -0.5) -- (-1, -2) -- (1, -2) -- (1, -0.5);
    
    \node[] at (-1.5, 2)  (z1label) {$Z_1$};
    \node[] at (-1.5, 0.25)  (z2label) {$Z_2$};
    \node[] at (-1.5, -1.25)  (z3label) {$Z_3$};
    
    \draw[color=black, thick] (6,3) -- (6,1.5) -- (4,1.5) -- (4, 3) -- cycle;
    
    \draw[color=black, thick] (4, 1.5) -- (4, -2) -- (6, -2) -- (6, 1.5);

    \node[] at (6.5, 0.5)  (ylabel) {$Y$};
    \node[] at (5.5, -1.75) (y1label) {$Y_1$};
    
    \draw[color=black, thick] (-0.4, 2.5) -- (0.25, 0.55);
    \draw[color=black, thick] (0.5, 2.5) -- (0.573333, 0.3);
    \draw[color=black, thick] (-0.4, 2.5) -- (5, 2.5);
    \draw[color=black, thick] (-0.4, 2.5) -- (5.2, 0.653846);
    \draw[color=red, dashed] (0, -1.5) -- (5.2, 0.653846);
    \draw[color=red, dashed] (0.25, 0.55) -- (5.5, 0.320118177);

    % Z1 nodes
    \filldraw[black] (-0.4, 2.5) circle (2pt) node[inner sep=5pt, anchor=east] {$3$};
    \filldraw[black] (0.5, 2.5) circle (2pt) node[inner sep=5pt, anchor=south] {$3$};
    \filldraw[black] (-0.1, 1.6) circle (2pt) node[inner sep=3pt, anchor=north east] {$3$};
    \filldraw[black] (0.51, 2.2) circle (2pt) node[inner sep=5pt, anchor=north west] {$3$};
    
    % Z2 nodes
    \filldraw[black] (0.25, 0.55) circle (2pt) node[inner sep=5pt, anchor=east] {$2$};
    \filldraw[black] (0.573333, 0.3) circle(2pt) node[inner sep=5pt, anchor=north] {$2$};
    
    % Z3 nodes
    \filldraw[black] (0, -1.5) circle(2pt) node[inner sep=5pt, anchor=east] {$2$};
    
    % Y_top nodes
    \filldraw[black] (5, 2.5) circle(2pt) node[inner sep=5pt, anchor=west] {$5$};
    
    % Y_1 nodes
    \filldraw[black] (5.2, 0.653846) circle(2pt) node[inner sep=5pt, anchor=west] {$4$};
    \filldraw[black] (4.5, 0.363905192) circle(2pt) node[inner sep=5pt, anchor=north] {$4$};
    \filldraw[black] (5.5, 0.320118177) circle(2pt) node[inner sep=5pt, anchor=west] {$4$};

    \end{tikzpicture}

    \caption{}
    \label{fig:prop51fig}
\end{figure}
        
        \begin{proposition} \label{prop}
            For all $v \in Z_1$, we have $d_{E_2}(v) = 0$. For all $v \in Z_2$, we have $d_{E_2}(v) \leq 1$. Moreover, $\abs{Z_3} \leq \abs{Y}$.
        \end{proposition}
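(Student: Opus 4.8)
The plan is to treat the two statements about $d_{E_2}$ and the inequality $\abs{Z_3} \leq \abs{Y}$ separately; each reduces to a short degree count played against the two excluded degree patterns $\degvec{4,4,3}$ and $\degvec{5,4,2}$, together with linearity of $H$.

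First, for $v \in Z_1$ (so $v\in Z$ and $d_H(v) = 3$): recall that every edge of $E_2$ meets $Z$ in exactly one vertex, so an $E_2$-edge through $v$ has its other two endpoints in $Y$, hence of degree at least $4$; its degree vector is then $\geq \degvec{4,4,3}$, which is excluded. Thus $d_{E_2}(v) = 0$. Next, for $v \in Z_2$ (a non-special vertex of $Z$ with $d_H(v) = 2$): if both edges through $v$ lay in $E_2$, then all four neighbors of $v$ would be in $Y$; since $v$ is not special, some neighbor $w$ has $d_H(w) \neq 4$, hence $d_H(w) \geq 5$, and the edge through $v$ and $w$ has degree vector $\geq \degvec{5,4,2}$, again excluded. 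Therefore at most one of the two edges at $v$ lies in $E_2$, i.e.\ $d_{E_2}(v) \leq 1$.

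For $\abs{Z_3} \leq \abs{Y}$, I would double count the pairs $(v,u)$ with $v \in Z_3$, $u \in Y$, and $v,u$ lying in a common edge of $H$. Two observations drive this. \emph{(i)} By linearity the two edges through a special vertex $v$ share only $v$, so $v$ has four distinct neighbors, each of degree exactly $4$ and hence in $Y$; thus $v$ lies in a common edge with exactly four vertices of $Y$, and the total number of pairs is exactly $4\abs{Z_3}$. \emph{(ii)} If a vertex $u$ lies in an edge $f = (u,x,y)$ with a special vertex $x$, then—since $f$ is one of the two edges at $x$—the definition of ``special'' forces $d_H(u) = d_H(y) = 4$; in particular $d_H(u) = 4$, so a vertex of $Y$ of degree $\geq 5$ lies in no edge with a special vertex, and $y$ is not special, so $f$ carries at most one special vertex. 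Combining \emph{(ii)} with linearity (distinct edges at $u$ give distinct special vertices), each $u \in Y$ lies in a common edge with at most $4$ special vertices—none if $d_H(u) \geq 5$, and at most one per incident edge, hence at most $d_H(u) = 4$, if $d_H(u) = 4$. So the number of pairs is at most $4\abs{Y}$, giving $4\abs{Z_3} \leq 4\abs{Y}$.

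I do not expect a genuine obstacle: everything follows from the two excluded degree vectors, the already-recorded fact that edges of $E_2$ meet $Z$ in exactly one vertex, and linearity. The one place needing care is observation \emph{(ii)}—that a degree-$\geq 5$ vertex of $Y$ never shares an edge with a special vertex and that an edge carries at most one special vertex—since this is precisely what caps the second count at $4\abs{Y}$; a cruder bound would only yield $\abs{Z_3} \leq 2\abs{Y}$. I would phrase ``special'' as a property attached to a specific incident edge so that this implication is immediate.
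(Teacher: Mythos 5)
Your proof is correct and follows essentially the same route as the paper: the first two statements are the same degree-vector exclusions, and your double count of (special vertex, $Y$-vertex) pairs is the same count the paper performs via an auxiliary graph on the degree-$4$ vertices of $Y$ (each special vertex contributing two $Y_1$-pairs, bounded from the $Y$ side by $d_H$), just with slightly different bookkeeping.
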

        
        \begin{proof}
            Since $H$ has no edge $(a, b, c) \in E(H)$ with $\degvec{a, b, c} \geq \degvec{4, 4, 3}$, for all $v \in Z_1$ we have $d_{E_2}(v) = 0$, proving the first statement.
            
            Assume that for some vertex $v \in Z_2$, $d_{E_2}(v) = 2$. Since H has no edge with $\degvec{a, b, c} \geq \degvec{5, 4, 2}$, the two edges in $E_2$ containing $v$ have all other vertices of degree $4$. But then $v \in Z_3$, contradicting the assumption and proving the second statement.
            
            To prove the third statement, let $Y_1$ denote the set of vertices with degree $4$ in $Y$. Let $G$ be the graph with vertex set $Y_1$ and let $(y,y')$ be an edge of $G$ for $y,y' \in Y_1$ if and only if there exists $z \in Z_3$ such that $(y,y',z) \in E_2$. Then
            \[ 2 \abs{Z_3} = \abs{E(G)} = \frac{1}{2} \sum_{y \in Y_1} d_{G}(y) \leq \frac{1}{2} \sum_{y \in Y_1} d_{H}(y) = 2 \abs{Y_1} \leq 2 \abs{Y}, \]
            implying $\abs{Z_3} \leq \abs{Y}$ as required.
        \end{proof}
            
            %Let's assume that $|Z_3| \geq |Y|$. Then, the number of special vertices must outnumber the number of vertices of degree four or higher. However, every special vertex must be connected to four vertices of degree four. By definition, each of these vertices can only be connected to four edges.
            
            %Every special vertex has four vertices of degree four; then, every degree-four vertex is connected to three other edges. If these other edges all intersect $Z_3$, then for every special vertex, there is a vertex in $Y$ and $|Z_3| = |Y|$. Thus, $\abs{Z_3} \leq \abs{Y}$. If these other edges do not all intersect $Z_3$, then there are more vertices in $Y$ than special vertices and $|Z_3| > |Y|$. Thus, $\abs{Z_3} \leq \abs{Y}$.
        
        Note that $\sum_{v \in Z} d_{E_1} (v)$ is at least a double-count of $\abs{E_1}$ from the definition of $E_1$. Therefore, using that $d_{E_1} (v) = 2 - d_{E_2} (v)$ for $v \in Z_2$ and that $d_{E_1} (v) = 0$ for $v \in Z_3$, we have
        \[ \abs{E_1} \leq \frac{1}{2} \left( \sum_{v \in Z_1} d_{E_1} (v) + \sum_{v \in Z_2} ( 2 - d_{E_2} (v) ) \right). \]
                From the first statement of Proposition \ref{prop}, we have
        \[ \abs{E_2} = \sum_{v \in Z_2} d_{E_2} (v) + \sum_{v \in Z_3} d_{E_2} (v). \]
                Therefore,
        \begin{align*}
        \abs{E(H)} & = \abs{E_{1}} + \abs{E_{2}} \\
        & \leq \frac{1}{2} \left( \sum_{v \in Z_1} d_{E_1}(v) + \sum_{v \in Z_2} (2 - d_{E_2}(v)) \right) + \sum_{v \in Z_2} d_{E_2}(v) + \sum_{v \in Z_3} d_{E_2}(v) \\
        & = \frac{1}{2} \left( \sum_{v \in Z_1} d_{E_1}(v) + 2 \abs{Z_2} + \sum_{v \in Z_2} d_{E_2}(v) \right) + \sum_{v \in Z_3} d_{E_2}(v) \\
        & \leq \frac{1}{2} \left( 3 \abs{Z_{1}} + 3 \abs{Z_{2}} \right) + 2 \abs{Z_{3}} \numberthis \label{first-ineq} \\
        & \leq \frac{1}{2} \left( 3 \abs{Z_{1}} + 3 \abs{Z_{2}} \right) + \abs{Z_{3}} + \abs{Y} \numberthis \label{second-ineq} \\
        & = \abs{Z_{1}} + \abs{Z_{2}} + \abs{Z_{3}} + \abs{Y} + \frac{1}{2} (\abs{Z_{1}} + \abs{Z_{2}}) \\
        & \leq n + \frac{n}{2} = \frac{3n}{2}
        \end{align*}
        where the inequality \eqref{first-ineq} follows from $d_{E_2}(v) \leq 1$ for $v \in Z_2$, and the inequality \eqref{second-ineq} follows from $\abs{Z_3} \leq \abs{Y}$ (second and third statements of Proposition \ref{prop}). We conclude that $\abs{E(H)} \leq \frac{3n}{2}$ contradicting the assumption that $H$ is a counterexample. \qed

    \section{Excluding critical configurations}
    \label{concluding-section}
    
    Here we prove Conjecture \ref{conj} for one particular critical configuration: an edge $e=(a,b,c)$ with $D(e) = \degvec{4,4,3}$ with link graph $G_6$ (see Figure \ref{fig-G6}). Note that although $D(e) < \degvec{4,4,4}$, $G_6$ cannot be obtained from any $G_i$ with $i \in [5]$ by deleting an edge, since those graphs (see Figure \ref{fig-link-graphs}) do not contain two vertex-disjoint four-cycles.
   
     \begin{figure}[H]
        \centering
        \includegraphics[width=0.6\linewidth]{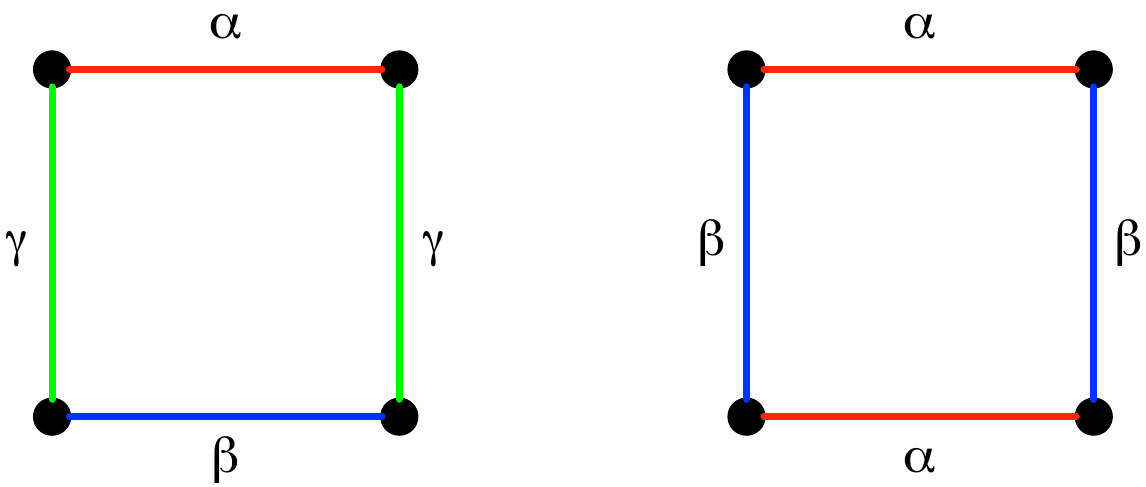}
        \caption{Link graph $G_6$}
        \label{fig-G6}
    \end{figure}
    
    \begin{proof}
        Let $H$ be a minimal counterexample with $n$ vertices containing an edge $e=(a,b,c)$ with $D(e) = \degvec{4,4,3}$ and with link graph $G_6$.
        
        Set $X = V(G_6) \cup \{a,b,c\}$. We prove that $X$ is incident to at most $\frac{3 \abs{X}}{2} = \frac{3 \times 11}{2} = 16.5$ edges of $H$. This will conclude the proof because after removing $X$ and its incident edges from $H$, we could have a smaller counterexample.
        
        In fact, we claim more: Apart from the extensions of the $8$ edges of $G_6$ and the edge $e$, $X$ can be incident only to those four possible edges of $H$ that contain a diagonal of the two four-cycles of $G_6$. Indeed, suppose that $f \in E(H)$ is not among these $14$ possibilities:
        
        \begin{enumerate}
            \item $a$ or $b$ is in $f$, without loss of generality, $a \in f$. Note that $f$ intersect $V(G_6)$ in at most one point and that point need to be one of the endpoints of the $\beta$-edge of the first component of $G_6$. Let $g$ be the edge containing $c$  and the $\gamma$-edge of the first component of $G_6$ which does not intersect $f$.  Let $h$ be an edge containing $b$ and a $\beta$-edge of the second component of $G_6$. Then $f,g,h$ are the jewels of a crown with base $e$, contradiction.
            
            \item $c$ is in $f$. Note that $f$ intersect $V(G_6)$ in at most one point and that point need to be on the second component of $G_6$. Let $g$ be the edge containing $b$  and the $\beta$-edge of the second component of $G_6$ which does not intersect $f$.  Let $h$ be the edge containing $a$ and the $\alpha$-edge of the first component of $G_6$. Then $f,g,h$ are the jewels of a crown with base $e$, contradiction.
            
            \item $f$ does not intersect $e$ and $x_1=V(G_6)\cap f$ is in the first component of $G_6$. We can select an $\alpha$-$\gamma$ path (or a $\beta$-$\gamma$ path) $x_1,x_2,x_3$ in the first component of $G_6$ and a $\beta$ (or an $\alpha$)  edge $(x_4,x_5)$ in the second component of $G_6$ such that this edge does not contain the possible intersection point of $f$ with the second component. Applying Lemma \ref{quintuple-lemma} with $Q=\{x_1,x_2,x_3,x_4,x_5\}$ leads to a contradiction.
            
            \item $f$ does not intersect $e$ and $x_1=V(G_6)\cap f$ is in the second component of $G_6$.  We can select an $\alpha$-$\beta$ path $x_1,x_2,x_3$ in the second component of $G_6$ and a $\gamma$-edge $(x_4,x_5)$ in the first component of $G_6$ such that this edge does not contain the possible intersection point of $f$ with the first component. Again, applying Lemma \ref{quintuple-lemma} with $Q=\{x_1,x_2,x_3,x_4,x_5\}$ leads to a contradiction.
        \end{enumerate}
    \end{proof}


\begin{thebibliography}{99}
        
        \bibitem{CR} C. J. Colbourn, A. Rosa, Triple systems, {\em Oxford Mathematical Monographs}, Calendron Press, Oxford, 1999.
        
        \bibitem{FGY} Z. F\"{u}redi, A. Gy\'arf\'as, An extension of Mantel's theorem to $k$-graphs, {\em American Mathematical Monthly} {\bf 127} (2020) 263-268.
        
        \bibitem{GYRS} A. Gy\'arf\'as, M. Ruszink\'o,  G. N. S\'ark\"ozy, Linear Tur\'an numbers of acyclic triple systems, to appear in European Journal of Combinatorics.
        
        \bibitem{RSZ} I. Z. Ruzsa, E. Szemer\'{e}di, Triple systems with no six
        points carrying three triangles, in {\em
            Combinatorics (Keszthely, 1976), Coll. Math. Soc. J. Bolyai 18, Volume II.}
        939-945.
        
    \end{thebibliography}
\end{document}